\theoremstyle{plain} 
\newtheorem{theorem}{\indent\bf Theorem}[section]
\newtheorem{lemma}[theorem]{\indent\bf Lemma}
\newtheorem{proposition}[theorem]{\indent\bf Proposition}
\newtheorem{claim}[theorem]{\indent\bf Claim}
\theoremstyle{definition} 
\newtheorem{condition}[theorem]{\indent\bf Condition}
\begin{document}

\title[Optimal jet $L^2$ extension]{The optimal jet $L^2$ extension of Ohsawa-Takegoshi type} 

\author[G. Hosono]{Genki Hosono} 

\subjclass[2010]{ 
	32A10, 32A36
}
%
\keywords{ 
	$L^2$ extensions, jets, optimal estimates
}
\address{
Graduate School of Mathematical Sciences, The University of Tokyo \endgraf
3-8-1 Komaba, Meguro-ku, Tokyo, 153-8914 \endgraf
Japan
}
\email{genkih@ms.u-tokyo.ac.jp}
\maketitle
\begin{abstract}
We prove the $L^2$ extension theorem for jets with optimal estimate following the method of Berndtsson-Lempert.
For this purpose, following Demailly's construction, we consider Hermitian metrics on jet vector bundles.
\end{abstract}

\section{Introduction}
The {\it Ohsawa-Takegoshi $L^2$ extension theorem}, first proved in \cite{OT}, is one of the most important theorems in complex geometry and several complex variables.
It is used for many purposes and variants of the extension theorem have been also studied.
B{\l}ocki \cite{Blo} and Guan-Zhou \cite{GZ} proved $L^2$ extension theorem with optimal estimate.
After that, another proof of optimal estimate was obtained by Berndtsson-Lempert \cite{BL}.
This new proof is based on Berndtsson's result \cite{Ber} on positivity of vector bundles associated to holomorphic fibrations.

The main purpose of this paper is to apply the method of Berndtsson-Lempert to jet extension to obtain the optimal estimate.
Jet extension is a variant of the $L^2$ extension theorem and studied by \cite{Pop} for example.
Recently, Demailly \cite{Dem} considered the extension from more general non-reduced varieties.

Our setting is as follows.
Let $D$ be a bounded pseudoconvex domain in $\mathbb{C}^n$ and $S$ be a closed submanifold of codimension $k$ in $D$.
Let $G$ be a ``Green-type function'' with poles along $S$.
Specifically, we assume that $G$ is negative and plurisubharmonic on $D$, continuous on $D \setminus S$ and near each point of $S$ it is written as a sum of the function $\log(|z_1|^2+\cdots + |z_k|^2)$ and a continuous term for some coordinate $(z_1,\ldots, z_n)$.
Let $\phi$ be a continuous plurisubharmonic function on $D$.
Fix an integer $p\geq 2$.
We will denote by $\mathcal{I}_S$ the ideal sheaf associated to $S$ and by $J^{(p-1)}$ the vector bundle $\mathcal{I}^{p-1}_S/\mathcal{I}^{p}_S$.
Under these settings, we will define a Hermitian metric on $J^{(p-1)}$ as
$$\langle f_x, g_x \rangle_{J^{(p-1)}_x}:= \lim_{t \to -\infty} \int_{U_x \cap \{t<G<t+1\}}  f_x \overline{g_x} e^{-\phi - (k+p-1)G} dV_{U_x}, $$
where $f_x, g_x \in J^{(p-1)}_x$ and we identify $f_x$, $g_x$ with corresponding homogeneous functions of degree $(p-1)$ in variables $(z_1,\ldots, z_k)$.
We define $U_x:=\{q \in U: z_{k+1}(q) = z_{k+1}(x), \ldots, z_n(q) = z_n(x) \}$ and denote by $dV_{U_x}$ measure on $U_x$ such that $dV_{U_x}(z_1,\ldots, z_k)\cdot dV_S(z_{k+1},\ldots, z_n ) = dV_{\mathbb{C}^n}$, where $dV_{\mathbb{C}^n}$ denotes the Lebesgue measure on $\mathbb{C}^n$ and $dV_S$ denotes the volume form on $S$ induced by restriction of Euclidean metric on $\mathbb{C}^n$. (cf.\ \S 2).
This kind of construction was also used in \cite{Oh} and \cite{Dem}.
In these papers, essentially limit superior is used instead of limit.
In our approach, we require the existence of the limit. We are not certain whether this requirement can be relaxed.

We denote by $A^2(S,J^{(p-1)})$ the space of $L^2$ holomorphic sections of $J^{(p-1)}$ on $S$ with respect to the metric $|\cdot|_{J^{(p-1)}}$ and the volume form $dV_S$.
Our result is as follows:
\begin{theorem}\label{thm:main}
Let $D,S,G,\phi$ as above.
For every $f \in A^{2}(S,J^{(p-1)})$, there exists an extension $F_0 \in H^0(D,\mathcal{I}_S^{(p-1)})$ of $f$ such that
$$ \int_D |F_0|^2 e^{-\phi - (k+p-2) G}d\lambda \leq \|f\|^2_{J^{(p-1)}}.$$
\end{theorem}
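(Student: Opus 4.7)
The proof will follow the Berndtsson--Lempert strategy from the classical Ohsawa--Takegoshi case ($p=1$), adapted to the jet setting by replacing ``prescribed value on $S$'' by ``prescribed $(p-1)$-jet along $S$''. Three ingredients of that method transfer: a one-parameter family of $L^2$-minimal extensions $F_t$ on sub-level sets of $G$, log-concavity of $t \mapsto \|F_t\|_t^2$ coming from Berndtsson's theorem on positivity of direct images, and a sharp asymptotic $\|F_t\|_t^2 \sim e^t \|f\|_{J^{(p-1)}}^2$ as $t \to -\infty$.

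Concretely, set $D_t := \{z \in D : G(z) < t\}$ for $t \in (-\infty, 0]$, so that $D_0 = D$, and let $F_t \in H^0(D_t, \mathcal{I}_S^{p-1})$ be the unique holomorphic section whose class modulo $\mathcal{I}_S^{p}$ equals $f$ and which minimises
$$\|F\|_t^2 := \int_{D_t} |F|^2 e^{-\phi - (k+p-2)G} \, d\lambda.$$
Existence of at least one admissible extension on each $D_t$ is needed to make the problem non-empty; this can be supplied by a qualitative jet-extension theorem such as Demailly's \cite{Dem}, after which $F_t$ is obtained by Hilbert-space orthogonal projection. The pseudoconvex total space $\Omega := \{(z,\tau) \in D \times \mathbb{C} : G(z) < \mathrm{Re}\,\tau < 0\}$ fibres over the strip $\{\tau : \mathrm{Re}\,\tau \in (-\infty, 0)\}$ with fibre over $\tau$ equal to $D_{\mathrm{Re}\,\tau}$. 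Equipped with the pulled-back plurisubharmonic weight $\phi + (k+p-2)G$, it is the setting for Berndtsson's theorem on positivity of direct images, in the version appropriate to sections taking prescribed values in the Hermitian jet bundle $J^{(p-1)}$ constructed in \S 2. The upshot is log-plurisubharmonicity in $\tau$ of the Bergman-kernel-type quantity $\|F_{\mathrm{Re}\,\tau}\|_{\mathrm{Re}\,\tau}^{-2}$ evaluated against the fixed jet $f$, and hence concavity of $t \mapsto \log\|F_t\|_t^2$.

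Finally, I would analyse the two asymptotic regimes. As $t \to 0^-$, $D_t \nearrow D$, and a weak-limit argument produces an extension $F_0 \in H^0(D,\mathcal{I}_S^{p-1})$ of $f$ with $\int_D |F_0|^2 e^{-\phi - (k+p-2)G}\,d\lambda \le \lim_{t \to 0^-}\|F_t\|_t^2$. As $t \to -\infty$, $D_t$ collapses onto $S$, and a coarea-formula computation using the local normal form $G = \log(|z_1|^2 + \cdots + |z_k|^2) + (\text{continuous})$ shows that $\|F_t\|_t^2 \sim e^t \|f\|_{J^{(p-1)}}^2$: the factor $1/|\nabla G| \sim |z|/2$, the $(2k-1)$-sphere volume $r^{2k-1}$, and the power of $|z|$ coming from $F_t \in \mathcal{I}_S^{p-1}$ combine to give exactly the normalising factor built into the definition of the metric on $J^{(p-1)}$. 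Concavity of $\log\|F_t\|_t^2$ together with this asymptote forces $\log\|F_t\|_t^2 \le t + \log\|f\|_{J^{(p-1)}}^2$ for all $t$, and evaluating at $t = 0$ yields the theorem. The main obstacle will be this $t \to -\infty$ asymptotic: one must show rigorously that the mass of the extremal $F_t$ concentrates on thin shells near $S$ and that its leading behaviour matches the prescribed jet $f$ with the optimal constant. The hypothesis that the defining limit for the Hermitian metric on $J^{(p-1)}$ genuinely exists (rather than only as a $\limsup$) is precisely what enables this sharp identification.
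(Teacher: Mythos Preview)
Your outline follows the Berndtsson--Lempert strategy correctly at the level of ideas, but it is a genuinely different implementation from the one in the paper, and one technical point you pass over is exactly the reason the paper chose its route.

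\textbf{What the paper does differently.} The paper does \emph{not} vary the domain. It keeps $D$ fixed and introduces, for each $s\le 0$ and an auxiliary parameter $q>0$, the weight $\phi+(p+k-2)G+q\max(G-s,0)$; the large-$q$ limit mimics cutting down to your $D_s$. It then works \emph{dually}: for each smooth compactly supported section $g$ of $J^{(p-1)}$ one forms the bounded functional $\xi_g(h)=\int_S\langle \widehat h,g\rangle_{J^{(p-1)}}dV_S$ and studies $s\mapsto \log\|\xi_g\|^2_{s,q}$. Log-convexity of this map is obtained \emph{not} from Berndtsson's curvature theorem but from the already-known optimal $L^2$ extension for functions in one extra variable, via the Guan--Zhou argument. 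Boundedness of $e^{s}\|\xi_g\|^2_{s,q}$ then forces monotonicity, and the limit $\lim_{s\to-\infty}e^{s}\|\xi_g\|^2_{s,q}\ge \|\widetilde g\|^2_{J^{(p-1)}}-\delta$ (for $q$ large) is obtained by testing $\xi_g$ against a holomorphic approximation of $\widetilde g$ and splitting the resulting integral into the piece over $\{G<s\}$ (handled by an elementary $\liminf$ inequality) and the tail over $\{G\ge s\}$ (killed by $q\to\infty$). The quotient-norm duality then gives the theorem.

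\textbf{Why this matters for your version.} You invoke Berndtsson's positivity theorem directly on the fibration $\{G(z)<\operatorname{Re}\tau\}\to\{\tau\}$ with weight $\phi+(k+p-2)G$. The paper explicitly remarks that because $G$ is singular along $S$ this would require a nontrivial approximation argument, and avoids it by deducing log-convexity from the optimal extension theorem instead. You would have to supply that approximation, and also make precise the ``version appropriate to sections taking prescribed values in $J^{(p-1)}$'': what is needed is log-concavity of the minimal-extension norm for the quotient $A^2/ (A^2\cap\mathcal I_S^{p})$, which is not Berndtsson's theorem as stated but a consequence for quotient bundles. Your $t\to-\infty$ asymptotic is essentially the paper's Lemma on $\liminf_{s\to-\infty}e^{-s}\int_{D_s}|g'|^{2}e^{-\phi-(p+k-2)G}$, so that part transfers; but note the paper only proves a one-sided $\liminf$ inequality (which is all that is needed), not the two-sided $\sim$ you assert.

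In short: your scheme is a legitimate ``primal, domain-varying'' variant of Berndtsson--Lempert, while the paper runs the ``dual, weight-varying'' variant with an extra parameter $q$. The paper's choice buys robustness against the singularity of $G$ and reduces the convexity input to the scalar optimal extension theorem; your choice is conceptually cleaner but leaves the singular-weight issue in Berndtsson's theorem to be dealt with.
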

Note that this estimate is optimal. One can check optimality easily if taking $D$ as the unit disc in $\mathbb{C}$, $S$ as the origin, $G = \log|z|^2$ and $\phi = 0$.

Our formulation is similar to \cite{Dem}.
In \cite{Dem}, it was remarked that the optimal estimate may be obtained if one chooses the auxiliary functions carefully, 
but the optimal estimate was not proved explicitly.
In this paper, we obtained the optimal result in the special case that $D$ is a pseudoconvex domain and locally $G=\log(|z_1|^2 + \ldots + |z_k|^2) + (\text{continuous term})$.

We prove Theorem \ref{thm:main} following the proof in \cite{BL}. To describe necessary changes, we repeat the argument in \cite{BL}.
In our proof, instead of curvature positivity \cite{Ber}, we used the optimal $L^2$ extension theorem for holomorphic functions proved in \cite{Blo}, \cite{GZ} and \cite{BL}.

The structure of the paper is as follows. In \S 2, the precise definition of the metric in $J^{(p-1)}$ is described and its well-definedness is proved.
We will prove Theorem \ref{thm:main} using the method of Berndtsson-Lempert in \S 3.

\section{Hermitian metrics on jet bundles associated to Green-type functions}
Let $D \Subset \mathbb{C}^n$ be a bounded pseudoconvex domain, $S$ be a closed complex submanifold in $D$ of codimension $k$ and $\phi$ be a continuous plurisubharmonic function on $D$.
Let $G$ be a ``Green-type'' function on $D$ with singularities along S.
Specifically, we assume that $G$ is negative and plurisubharmonic on $D$, continuous on $D \setminus S$ and that for every $x_0 \in S$ there is a coordinate $(z_1,\ldots, z_n)$ near $x_0$ satisfying the following condition:
\begin{condition}\label{cond:good-coord}
\begin{itemize}
\item[(1)] $S = \{z_1 = z_2 = \cdots = z_k = 0\}$.
\item[(2)] $G = \log(|z_1|^2 + \cdots + |z_k|^2) + \gamma$ for some  continuous function $\gamma$ defined on a neighborhood of $x_0$.
\end{itemize}
\end{condition}

Fix an integer $p \geq 2$.
We denote by $\mathcal{I}_S$ the ideal sheaf defined by $S$.
Define $J^{(p-1)} := \mathcal{I}^{p-1}_S / \mathcal{I}^p_S$. We have that $J^{(p-1)}$ is a vector bundle on $S$.
In this section we will define a Hermitian metric $|\cdot|_{J^{(p-1)}}$ on $J^{(p-1)}$ following the construction in \cite{Dem}.

Fix $x_0 \in S$. Take a coordinate $(z_1,\ldots, z_n)$ on a neighborhood $U$ of $x_0$ satisfying Condition \ref{cond:good-coord}.
First we define the metric $|\cdot|_{J^{(p-1)}}$ using this coordinate, and then we shall show that it is independent of the choice of coordinates. 
For each point $x \in U \cap S$, define $U_{x}$ as $U_{x} := \{q\in U: z_{k+1}(q)=z_{k+1}(x), \ldots , z_n(q) = z_n(x) \}$.
We define a volume form $dV_{U_x}$ on $U_x$ by the following property:
$dV_{U_x}(z_1,\ldots, z_k) \cdot dV_S(z_{k+1}, \ldots, z_n) = dV_{\mathbb{C}^n}$.
Here $dV_S$ denotes the volume form on $S$ induced by the standard metric on $\mathbb{C}^n$.

Let $f_x, g_x \in J^{(p-1)}_x$, where $J_x^{(p-1)}$ denotes the fiber of the vector bundle $J^{(p-1)}$ over $x$. Then $f_x, g_x$ correspond to homogeneous polynomials in $z_1, \ldots z_k$ of degree $p-1$.
We will denote these polynomials by the same symbols $f_x, g_x$.
We define
\begin{align}\label{eqn:metric-def}
\langle f_x, g_x \rangle_{J^{(p-1)}_x}:= \lim_{t \to -\infty} \int_{U_x \cap \{t<G<t+1\}} f_x \overline{g_x} e^{-\phi - (p+k-1)G} dV_{U_x}.
\end{align}

It is well-defined (i.e.\ the limit exists and is independent of the choice of coordinates) due to the following proposition.
We will denote by $A^2(S,J^{(p-1)})$ the space of $L^2$ holomorphic sections of $J^{(p-1)}$ with respect to the metric $|\cdot|_{J^{(p-1)}}$ and the volume form $dV_S$. The associated $L^2$ norm is denoted by $\|\cdot\|_{J^{(p-1)}}$.

\begin{proposition}\label{prop:Herm-met-on-jets}
\begin{itemize}
\item[(1)] The limit (\ref{eqn:metric-def}) exists.
\item[(2)] Assume $\widetilde{f_x}$, $\widetilde{g_x}$ are functions on $U_x$ of the form $f_x + o(|z'|^{p-1})$ and $g_x + o(|z'|^{p-1})$. Even if we use $\widetilde{f_x}$ and $\widetilde{g_x}$ in (\ref{eqn:metric-def}) instead of $f_x$ and $g_x$, the limit does not change.
\item[(3)] The definition of $\langle \cdot, \cdot \rangle_{J^{(p-1)}_x}$ is independent of the choice of coordinates.
\item[(4)] Let $f, g$ be smooth sections of $J^{(p-1)}$ with compact supports.
Take smooth extensions $\widetilde{f}$, $\widetilde{g}$ of $f$, $g$ on $D$.
Then we have
$$\int_S \langle f,g \rangle_{J^{(p-1)}_x} dV_S  = \lim_{t \to -\infty} \int_{D \cap \{t<G<t+1\}} \widetilde{f}\, \overline{\widetilde{g}}e^{-\phi -(p+k-1) G} dV_{\mathbb{C}^n}. $$
\end{itemize}

\end{proposition}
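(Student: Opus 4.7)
The plan is to dispatch (1) and (2) by an explicit rescaling on each fiber $U_x$, derive (4) from these via Fubini and dominated convergence, and deduce (3) indirectly from the intrinsic nature of the bulk integral appearing in (4). This strategy avoids any direct coordinate-change bookkeeping, which would otherwise be delicate, since the continuity of $\gamma$ in two distinct coordinates already forces the normal change-of-coordinate matrix to be conformal.

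For (1) and (2), the approach is to fix $x \in S$ and a coordinate $(z_1,\ldots,z_n)$ satisfying Condition \ref{cond:good-coord}, then make the substitution $z' = e^{t/2}w$ on $U_x$. Under this substitution the region $\{t < G < t+1\}$ becomes $\{0 < \log|w|^2 + \gamma(e^{t/2}w,x'') < 1\}$, which shrinks as $t \to -\infty$ to the fixed compact annulus $\{e^{-\gamma(x)} < |w|^2 < e^{1-\gamma(x)}\}$ in $\mathbb{C}^k \setminus \{0\}$. Homogeneity of $f_x\overline{g_x}$ contributes $e^{t(p-1)}$, the factor $e^{-(p+k-1)G}$ contributes $e^{-(p+k-1)t}$, and the rescaling of $dV_{U_x}$ contributes $e^{tk}$; these three exponentials cancel exactly. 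By continuity of $\phi$ and $\gamma$, the remaining integrand will converge uniformly on the limiting annulus, and dominated convergence will give the limit, proving (1). Part (2) then follows because after rescaling the perturbation $o(|z'|^{p-1})$ becomes $o(1)$ uniformly on the fixed annulus, so the cross and square error terms vanish in the limit.

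For (4), the plan is to work in a coordinate chart covering the supports of $\tilde{f},\tilde{g}$ and apply Fubini via $dV_{\mathbb{C}^n} = dV_{U_x}(z')\cdot dV_S(z'')$:
\begin{equation*}
\int_{D\cap\{t<G<t+1\}} \tilde{f}\,\overline{\tilde{g}}\, e^{-\phi-(p+k-1)G}\, dV_{\mathbb{C}^n} = \int_S \left[\int_{U_x\cap\{t<G<t+1\}} \tilde{f}\,\overline{\tilde{g}}\, e^{-\phi-(p+k-1)G}\, dV_{U_x}\right] dV_S(x).
\end{equation*}
Any natural smooth extension of a section of $J^{(p-1)} = \mathcal{I}_S^{p-1}/\mathcal{I}_S^p$ satisfies $\tilde{f}|_{U_x}(z') = f_x(z') + o(|z'|^{p-1})$, so by (2) the inner integral converges to $\langle f(x), g(x)\rangle_{J^{(p-1)}_x}$. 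Interchanging limit and outer integral will require a uniform bound on the inner integrals in $t$, which should be extractable from the explicit pointwise formula produced in (1) together with continuity of $\phi,\gamma$ and compactness of the supports of $f, g$.

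For (3), the argument is indirect. The right-hand side of (4) depends only on the intrinsic data $(\tilde{f},\tilde{g},\phi,G,dV_{\mathbb{C}^n})$, so applying (4) in two coordinate systems both satisfying Condition \ref{cond:good-coord} yields $\int_S \langle f,g\rangle^{(1)}_x\, dV_S = \int_S \langle f,g\rangle^{(2)}_x\, dV_S$ for every smooth compactly supported pair $(f,g)$. Taking $f,g$ concentrated near an arbitrary $x_0 \in S$ and invoking continuity of both inner products in $x$ (visible from the explicit formula in (1)) will yield pointwise equality at $x_0$. The main obstacle throughout is the uniform dominated-convergence bookkeeping, especially in the derivation of (4), where the inner integrals on $U_x$ need to be uniformly controlled in $x$ as $t \to -\infty$; continuity of $\phi$ and $\gamma$, together with the homogeneity of $f_x,g_x$ and compact supports of $f,g$, should be sufficient.
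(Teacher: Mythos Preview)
Your handling of (1), (2), and (4) is essentially the paper's, up to packaging: where you rescale by $z'=e^{t/2}w$ to pull the shrinking shell back to a fixed annulus, the paper passes to polar coordinates $z'=ru$ and sandwiches the varying shell $\{t-\gamma(z)<\log|z'|^2<t+1-\gamma(z)\}$ between concentric shells with radii perturbed by $\pm\epsilon$, computing the same explicit limit $\tfrac12 e^{-\phi(x)-(p+k-1)\gamma(x)}A(x)\int_{S^{2k-1}}R\,d\lambda$. For (4) the paper reduces to a single chart via a partition of unity (which your outline should also invoke, since one chart need not cover the supports) and produces the dominated-convergence majorant by bounding $|\tilde f|\le R_1$, $|\tilde g|\le R_2$ with $R_i$ homogeneous of degree $p-1$---exactly the uniform control your sketch anticipates.

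The real divergence is (3). The paper proves coordinate-independence directly: it factors an arbitrary change $(z)\to(z')$ through the intermediate system $(z_1,\ldots,z_k,z'_{k+1},\ldots,z'_n)$, reducing to two cases, namely (i) same normal coordinates but different tangential ones, handled by a biholomorphism $i\colon U_x\to U'_x$ together with continuity of $G$, $\phi$, and the density $A$, and (ii) same tangential but different normal coordinates, dispatched by (2) since the two representative polynomials differ by $o(|z'|^{p-1})$. Your indirect route---establish (4) first in each coordinate system, note that its right-hand side is intrinsic, then localize the identity $\int_S\langle f,g\rangle^{(1)}\,dV_S=\int_S\langle f,g\rangle^{(2)}\,dV_S$ via bump sections and continuity of the fiber metric---is correct and sidesteps the case split entirely. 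The trade-off is a logical reordering: you must prove (4), at least for sections supported in a single chart, \emph{before} (3), whereas the paper keeps the stated order at the cost of a page of coordinate bookkeeping.
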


\begin{proof}
(1)
By assumption, we write $G = \log (|z_1|^2+ \cdots + |z_k|^2) + \gamma$ with $\gamma$ continuous. Take a function $A$ such that
$$dV_{U_x} = A d\lambda(z'), $$
where $d\lambda(z')$ denotes the Lebesgue measure with respect to $z' = (z_1,\ldots, z_k)$.
Then we have
\begin{align*}
&\int_{U_x \cap \{t<G<t+1\}}f_x \overline{g_x} e^{-\phi- (p+k-1)G} d\lambda(z')\\
&=\int_{\{t-\gamma(z)<\log |z'|^2 <t+1-\gamma(z)\}} f_x(z)\overline{g_x(z)} \frac{1}{|z'|^{2(p+k-1)}}e^{-\phi-(p+k-1) \gamma(z)} Ad\lambda(z').\\
\end{align*} 
Here, $f_x\overline{g_x}$ is homogeneous of degree $2p-2$, i.e.\ $f_x(rz) \overline{g_x(rz)} = r^{2p-2} f_x(z) \overline{g_x(z)}$ for every $r>0$.
Considering positive and negative part for ${\rm Re}\, f_x \overline{g_x}$ and ${\rm Im}\, f_x \overline{g_x}$, we can write $f_x\overline{g_x}$ as a linear combination of four non-negative functions $R_1, \ldots, R_4$ homogeneous in degree $2p-2$.
Then it is enough to show the following claim:
\begin{claim}\label{clm:homogen-limit}
Assume $x= 0$. For a non-negative homogeneous function $R$ of degree $2p-2$, we have 
\begin{align*}
&\lim_{t \to -\infty}\int_{\{t-\gamma(z)<\log |z'|^2 <t+1-\gamma(z)\}} R(z) \frac{1}{|z'|^{2(p+k-1)}}e^{-\phi(z)-(p+k-1) \gamma(z)} Ad\lambda(z') \\
&= \frac{1}{2}e^{-\phi(0)-(p+k-1) \gamma(0)}A(0)\int_{S^{2k-1}} R(u)d\lambda_{S^{2k-1}}.
\end{align*}
\end{claim}

\begin{proof}
Fix $\epsilon>0$. 
There exists $t_0<0$ such that $|\gamma(z) - \gamma(0)| < \epsilon$ when $G = \log |z'|^2+\gamma(z) <t_0$.
Then, for $t \ll t_0$, we have the inclusion
\begin{align*}
& S_1:= \{t-\gamma(0)+\epsilon<\log |z'|^2 <t+1-\gamma(0)-\epsilon\} \\
\subset\, & S_2 :=\{t-\gamma(z)<\log |z'|^2 <t+1-\gamma(z)\}\\
\subset\, & S_3 :=\{ t-\gamma(0)-\epsilon < \log |z'|^2 <t+1 - \gamma(0) +\epsilon \}.
\end{align*}
We have that $|\phi(z) - \phi(0)| < \epsilon$ on $S_3$ for $t\ll t_0$.
Then we have
\begin{align}\label{eqn:three-integrals}
\begin{cases}
\; \,\,\, \int_{S_1} R(z) \frac{1}{|z'|^{2(p+k-1)}}e^{-\phi(0)-\epsilon-(p+k-1) (\gamma(0)+\epsilon)} (A(0)-\epsilon)d\lambda(z) \\
\; \leq\, \int_{S_2} R(z) \frac{1}{|z'|^{2(p+k-1)}}e^{-\phi(z)-(p+k-1)\gamma(z)} A(z)d\lambda(z) \\
\; \leq\, \int_{S_3} R(z) \frac{1}{|z'|^{2(p+k-1)}}e^{-\phi(0) + \epsilon -(p+k-1) (\gamma(0)-\epsilon)} (A(0) + \epsilon)d\lambda(z)
\end{cases}
\end{align}
Write $z = ru$ for $r := |z|>0$ and $u :=z/|z|$. Then we have the following formula
\begin{align}\label{eqn:polar-integral-formula}
\int_{\mathbb{C}^k} F(z) d\lambda(z) = \int_{r=0}^\infty \int_{u \in S^{2k-1}}F(ru) r^{2k-1} d\lambda_{S^{2k-1}}(u)dr.
\end{align}

By (\ref{eqn:polar-integral-formula}), the last integral in (\ref{eqn:three-integrals}) may be written
$$ \int_{r = e^{(t-\gamma(0)-\epsilon)/2}}^{r = e^{(t+1-\gamma(0)+\epsilon)/2}} \int_{u \in S^{2k-1} }R(ru) \frac{1}{r^{2(p+k-1)}} e^{-\phi(0) + \epsilon-(p+k-1) (\gamma(0)-\epsilon)} (A(0) + \epsilon) r^{2k-1} dV_{S^{2k-1}} dr.$$
Since $R$ is homogeneous in degree $2p-2$, a simple calculation shows that this integral equals to
$$ e^{-\phi(0)+\epsilon-(p+k-1) (\gamma(0)-\epsilon)}(A(0) + \epsilon)\int_{u \in S^{2k-1}} R(u)d\lambda_{S^{2k-1}} \left(\frac{1}{2} + \epsilon \right). $$
The first integral in (\ref{eqn:three-integrals}) is computed similarly. Then we let $\epsilon \to 0$ and get the desired limit
$$ \frac{1}{2}e^{-\phi(0)-(p+k-1) \gamma(0)}A(0)\int_{u \in S^{2k-1}} R(u)d\lambda_{S^{2k-1}}.$$
\end{proof}
By this claim, the convergence of the limit in $|\cdot|^2_{J^{(p-1)}}$ is proved.

(2) By assumption, we can write $\widetilde{f_x} = f_x + r_f$ and $\widetilde{g_x} = g_x + r_g$, where $r_f$ and $r_g$ are $o(|z'|^{p-1})$. 
It suffices to show that
$$\lim_{t \to -\infty} \int_{U_x \cap \{t<G<t+1\}} (r_f \overline{g_x} + f_x \overline{r_g} + r_f \overline{r_g}) e^{-\phi - (p+k-1)G} dV_{U_x} = 0.$$
This is due to the following claim.
\begin{claim}
Let $R$ be a positive function such that $R(z)= o(|z'|^{2p-2})$, i.e.\ $\lim_{t \to -\infty} R(z)/|z'|^{2p-2} = 0$. 
Then we have 
$$ \lim_{t \to -\infty}\int_{\{t-\gamma(z)<\log |z'|^2 <t+1-\gamma(z)\}} R(z) \frac{1}{|z'|^{2(p+k-1)}}e^{-(p+k-1) \gamma(z)} d\lambda(z') = 0.$$
\end{claim}

\begin{proof}
The proof is similar to Claim \ref{clm:homogen-limit}.
The point is that we use the assumption $R(z) = o(|z|^{2p-2})$ instead of the homogeneity.
\end{proof}

(3) Let $x \in S$. 
Assume $(z_1,\ldots,z_n)$ and $(z'_1,\ldots,z'_n)$ are coordinates around $x$ satisfying Condition \ref{cond:good-coord}.
At first, we will show that $(z_1,\ldots,z_k,z'_{k+1},\ldots, z'_n)$ is again a coordinate around $x$.
We consider the Jacobian of the coordinate change $(z_1,\ldots,z_k,z_{k+1},\ldots,z_n) \to (z_1,\ldots,z_k,z'_{k+1},\ldots, z'_n)$.
We note that by Condition \ref{cond:good-coord} both $(z_{k+1}, \ldots,z_n)$ and $(z'_{k+1}, \ldots,z'_n)$ are coordinates on $S$.
Thus, the partial Jacobian does not vanish:
\[\det J' := \det \frac{\partial(z'_{k+1},\ldots,z'_n) }{\partial(z_{k+1},\ldots,z_n)} \neq 0.\]
Then we can calculate the whole Jacobian as follows:
\[\det J := \det \frac{\partial(z_1, \ldots,z_k,z'_{k+1},\ldots,z'_n)}{\partial(z_1, \ldots,z_k, z_{k+1},\ldots,z_n)} = \det\begin{pmatrix}
1_{k} & \ast \\
0 & J'
\end{pmatrix}
= \det J' \neq 0.\]
Thus, $(z_1,\ldots, z_k, z'_{k+1}, \ldots, z'_n)$ is also a coordinate around $x$ when we shrink the coordinate open set if necessary.
Note that Condition \ref{cond:good-coord} is automatically satisfied for $(z_1,\ldots,z_k,z'_{k+1},\ldots,z'_n)$
As a consequence, we only need to consider the following two cases:
\begin{itemize}
\item[(i)] $z_1 = z'_1$, $\ldots$, $z_k = z'_k$.
\item[(ii)] $z_{k+1} = z'_{k+1}$, $\ldots$, $z_n = z'_n$.
\end{itemize}

We begin by proving the proposition under assumption (i).
We assume $x = (0,0,\ldots,0)$ in both coordinates.
We define $U_x := \{q: (z_{k+1}(q), \ldots, z_n(q)) = (0,\ldots,0)\}$ and $U'_x := \{q: (z'_{k+1}(q), \ldots, z'_n(q)) = (0,\ldots,0)\}$.
After shrinking $U_x$ and $U'_x$ if necessary, we have the biholomorphism
$$i : U_x \to  U'_x$$
which sends a point $(z_1,\ldots, z_k,0,\ldots, 0)$ (in $z$-coordinate) to the point $(z_1,\ldots, z_k,0,\ldots, 0)$ (in $z'$-coordinate).
To prove well-definedness, we need to show that
$$\lim_{t \to -\infty} \int_{U_x \cap \{t<G<t+1\}} |f_x|^2 e^{-\phi-(p+k-1) G}dV_{U_x} =  \lim_{t \to -\infty} \int_{U'_x \cap \{t<G<t+1\}} |f_x|^2 e^{-\phi-(p+k-1) G}dV_{U'_x}. $$
Here, $f_x$ denotes a monomial of degree $p-1$ in $z_1,\ldots,z_k$, which is also a monomial of $z'_1,\ldots, z'_k$, because of the assumption $z_1 = z'_1, \ldots, z_k = z'_k$.
By the coordinate change via $i$, the second limit can be rewritten as
$$ \lim_{t \to -\infty} \int_{U_x \cap \{t < G \circ i < t+1\}} |f_x|^2 e^{-\phi\circ i-(p+k-1) G\circ i} i^*dV_{U'_x}.$$
By continuity, $G \circ i - G$ and $\phi \circ i - \phi$ are sufficiently small when $z \in U_x$ is sufficiently near to $x$. By a simple calculation, the coefficient of $dV_{U_x}$ and $i^*dV_{U'_x}$ with respect to $d\lambda(z_1,\ldots, z_k)$ is also identical at $x$.
By these observations, we can conclude that the two limits are the same in this situation.

Next we prove the proposition under assumption (ii), i.e.\ $z_{k+1} = z'_{k+1}$, $\ldots$, $z_n = z'_n$.
In this case, we have that $U_x = U'_x$ and $dV_{U_x} = dV_{U'_x}$. What we need to show is 
$$\lim_{t \to -\infty} \int_{U_x \cap \{t<G<t+1\}} |f_x|^2 e^{-\phi-(p+k-1) G}dV_{U_x} = \lim_{t \to -\infty} \int_{U_x \cap \{t<G<t+1\}} |f'_x|^2 e^{-\phi-(p+k-1) G}dV_{U_x}.$$
Here, the only difference of each side is that $f_x$ is the monomial of coordinate functions $z_1,\ldots,z_k$ while $f'_x$ is the monomial of $z'_1,\ldots,z'_k$.
We have that $f_x - f'_x = o(|z|^{p-1})$, and thus limits are the same by (2).
Therefore, the metric $|\cdot|_{J^{(p-1)}}$ is independent of the choice of coordinates.

(4) Taking coordinates around $S$ and the partition of unity, we can assume that the supports of $\widetilde{f}$ and $\widetilde{g}$ are contained in a single coordinate chart around $S$.
Then the left-hand side is written as
$$\int_S \langle f,g \rangle_{J^{(p-1)}} dV_S = \int_{x \in S} \left[\lim_{t \to -\infty} \int_{U_x \cap \{t<G<t+1 \}} \widetilde{f}\, \overline{\widetilde{g}} e^{-\phi-(p+k-1) G} dV_{U_x} \right] dV_S. $$

By Fubini's theorem, the right-hand side can be written as
$$\lim_{t \to -\infty} \int_{x \in S}  \left[\int_{U_x \cap \{t<G<t+1\}} \widetilde{f}\, \overline{\widetilde{g}} e^{-\phi-(p+k-1) G} dV_{U_x}\right] dV_S. $$
To use the dominated convergence theorem, we want to find an integrable function $F(x)$ such that
$$\left|\int_{U_x \cap \{t<G<t+1\}} \widetilde{f}\, \overline{\widetilde{g}} e^{-\phi-(p+k-1) G} dV_{U_x} \right| \leq F(x)$$
uniformly in $t$. 
Write $\widetilde{f} = f + r_f$ where $r_f = o(|z|^{p-1})$.
On a compact subset of $S$, $|r_f|$ can be bounded by $C|z'|^{p-1}$ independently of $z''$. Thus there exists a non-negative function $R_1$ which is homogeneous in degree $p-1$, such that $|\widetilde{f}| \leq R_1$. Similarly there also exists $R_2$ bounding $\widetilde{g}$. 
Then we have
$$\left|\int_{U_x \cap \{t<G<t+1\}} \widetilde{f}\, \overline{\widetilde{g}} e^{-\phi-(p+k-1) G} dV_{U_x}\right| \leq  \int_{U_x \cap \{t<G<t+1\}} R_1 R_2 e^{-\phi - (p+k-1)G} dV_{U_x}$$
and by using the formula (\ref{eqn:polar-integral-formula}), it is shown that the right-hand side is bounded by an integrable function $F(x)$ uniformly in $t$.
Therefore we can apply the dominated convergence theorem.
\end{proof}

\section{Optimal $L^2$ extension of jets}
In this section, we prove Theorem \ref{thm:main} following the proof in \cite{BL} with necessary change.

First we shrink the domain $D$ slightly. Then we can assume that $D$ is strictly pseudoconvex and there exist a larger pseudoconvex domain $D' \Supset D$ and a closed submanifold $S'$ of $D'$ such that $S = S' \cap D$.
We can also assume that functions $G, \phi$ and a section $f$ are the restriction of functions or a section defined on $D'$ or $S'$.

By the theory of Stein manifolds, there exists a holomorphic extension of $f$ on $D'$. 
Restricting to $D$, we can find a holomorphic function $F \in A^2(D,\phi + (p+k-2)G)$ with $F|_{\mathcal{I}^p_S/\mathcal{I}^{p-1}_S} = f$.
Note that every holomorphic extension of $f$ in $A^2(D,\phi + (p+k-2)G)$ can be written as a sum of $F$ and an element of $A^2(D,\phi + (p+k-2)G) \cap \mathcal{I}^p_S$. 
Among them, the smallest value of the norm is
\begin{align}
\sup_{\substack{\xi \in A^2(D,\phi+(p+k-2)G)^*\setminus 0 \\ \xi|_{\mathcal{I}^p_S}\equiv 0}} \frac{|\langle\xi,F\rangle|^2}{\|\xi\|^2_{A^2(D,\phi+(p+k-2) G)^*}}, \label{eqn:quotient-norm-def}
\end{align}
which we denote
$$ \|F\|^2_{A^2(D,\phi+(p+k-2)G)/A^2(D,\phi+(p+k-2)G) \cap \mathcal{I}^p_S}.$$

Here we note that the subspace $A^2(D,\phi+(p+k-2)G) \cap \mathcal{I}^p_S$ is closed in $A^2(D,\phi + (p+k-2)G)$.
For the proof, take a convergent sequence $f_\nu$ in $A^2(D, \phi + (p+k-2) G) \cap \mathcal{I}_S^{p}$.
Then we have that $f_\nu$ is uniformly convergent on each compact subset.
Since coherent ideals are closed under uniform convergence on each compact subset \cite{GR}, we have that $\lim_\nu f_\nu \in \mathcal{I}_S^p$. 

In (\ref{eqn:quotient-norm-def}), $\xi$ can be restricted to a dense subspace. 
Fix a smooth section $g$ of $J^{(p-1)}$ with compact support. We define a functional $\xi_g$ by
$$\xi_g (h) := \int_S \langle  h ,g \rangle_{J^{(p-1)}} dV_S.$$
The linear form $\xi_g$ is bounded on $A^2(S,J^{(p-1)})$ and thus we can take the corresponding $L^2$ holomorphic section $\widetilde{g} \in A^2(S,J^{(p-1)})$ provided by the Riesz Representation Theorem.
For $h \in A^2(D,\phi + (p+k-2) G)$, we write its image in $J^{(p-1)} = \mathcal{I}_S^{p-1} / \mathcal{I}_S^p$ by $\widehat{h}$. Then we can also define a bounded linear functional $\xi_g \in A^2(D, \phi + (p+k-2) G)^*$ by $\xi_g(h):=\xi_g(\widehat{h})$.

The space of these $\xi_g$ is dense in the space $\{\xi \in A^2(D,\phi+(p+k-2)G)^*: \xi|_{\mathcal{I}^p_S}\equiv 0 \}$ because the condition that $\xi_g(f) = 0$ for every $g$ implies $f \in \mathcal{I}^p_S$.

Now we have that $\xi_g(F) = \xi_g(f) \leq \|f\|_{J^{(p-1)}} \|\widetilde{g}\|_{J^{(p-1)}}$, and thus
$$\|F\|^2_{A^2/A^2 \cap\, \mathcal{I}} \leq \sup_{g} \frac{\|f\|_{J^{(p-1)}}^2\|\widetilde{g}\|_{J^{(p-1)}}^2}{\|\xi_{g}\|^2_{A^2(D,\phi + (p+k-2)G)^*}}.$$

In the sequel, we will compare $\|\xi_g\|^2$ and $\|\widetilde{g}\|^2$.

\subsection{Log-convexity of norms}
For $s \leq 0$, let $\psi (s, z) := \max(G(z)-s,0)$.
Note that $\psi$ is plurisubharmonic as a function of $(\sigma, z)$ in $(n+1)$-variables, where $\sigma = s + \sqrt{-1}s'$ is a complexification of $s$.
Fix $q>0$ and consider the family of the spaces of $L^2$ holomorphic functions
$$A^2_{s,q}:=A^2(D,\phi+(p+k-2)G + q\psi(s,\cdot)) $$
parametrized by $s\leq 0$.
These spaces are identical as vector spaces, but $L^2$ metrics are different.
We have that
$\xi_g \in (A^2_{s,q})^* $
for every $s\leq 0$ and $\xi_g$ is constant with respect to $s$. We claim the following log-convexity:

\begin{lemma}[{\cite[Corollary 3.7]{GZ}}]\label{lem:log-convexity}
The function
$$\mathbb{R}_{\leq 0} \ni s \mapsto \log\|\xi_g\|^2_{(A^2_{s,q})^*} $$
is convex in $s$. To make the notation short, we write this norm as $\|\xi_g\|^2_{s,q}$.
\end{lemma}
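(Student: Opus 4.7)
The plan is to extend the real parameter $s$ to a complex variable $\sigma = s + \sqrt{-1}\,s'$ and then invoke a Berndtsson-type positivity theorem on the resulting tube domain. First, I would observe that on the tube $T := \{\sigma \in \mathbb{C} : \mathrm{Re}\,\sigma < 0\}$, the function $\psi(\mathrm{Re}\,\sigma, z) = \max(G(z) - \mathrm{Re}\,\sigma, 0)$ is plurisubharmonic in $(\sigma,z) \in T \times D$, as the maximum of the two jointly psh functions $G(z) - \mathrm{Re}\,\sigma$ and $0$. Consequently the total weight $\varphi_\sigma(z) := \phi(z) + (p+k-2)G(z) + q\,\psi(\mathrm{Re}\,\sigma, z)$ is psh in $(\sigma,z)$ on $T \times D$, and it depends only on $\mathrm{Re}\,\sigma$.

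Next, I would view $\{A^2_{s,q}\}$ as the fibers over $\sigma$ of a Hilbert bundle $\mathcal{H}$ over $T$, with $\xi_g$ defining a constant (and in particular holomorphic) section of the dual bundle $\mathcal{H}^*$. By Berndtsson's theorem on positivity of direct images \cite{Ber}, the metric on $\mathcal{H}$ is positively curved, and the corresponding dual statement reads: for any holomorphic section $\xi$ of $\mathcal{H}^*$, the function $\sigma \mapsto \log\|\xi(\sigma)\|^2_{\mathcal{H}^*_\sigma}$ is plurisubharmonic on $T$. Applied to $\xi = \xi_g$ this yields that $\sigma \mapsto \log\|\xi_g\|^2_{(A^2_{\mathrm{Re}\,\sigma,q})^*}$ is psh on $T$. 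Since the weight depends only on $\mathrm{Re}\,\sigma$, this psh function is invariant under imaginary translations $\sigma \mapsto \sigma + \sqrt{-1}\,t$, which forces it to be convex in $s = \mathrm{Re}\,\sigma$. This is exactly the asserted log-convexity.

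The main obstacle I expect is the passage from Berndtsson's theorem (most naturally stated as log-plurisubharmonicity of Bergman kernels, i.e.\ of dual norms of point evaluations at a fixed $z_0 \in D$) to the analogous statement for a general bounded linear functional such as $\xi_g$. A more self-contained route, which is the one taken in \cite[Corollary 3.7]{GZ}, avoids invoking Berndtsson's theorem in its full direct-image form. Instead, one writes $\|\xi_g\|^2_{s,q} = \xi_g(v_s)$ for the Riesz representer $v_s \in A^2_{s,q}$, and uses H\"ormander's $L^2$ estimate on the tube $T \times D$ with the psh weight $\varphi_\sigma$ to construct a holomorphic function $V(\sigma,z)$ on a strip $\{\mathrm{Re}\,\sigma \in (s_0, s_1)\}$ interpolating between the representers $v_{s_0}$ and $v_{s_1}$. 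A Cauchy--Schwarz step, together with harmonicity of the mean of $\log\|V(\sigma,\cdot)\|^2$ on the tube, then converts these estimates into the log-convexity inequality $\|\xi_g\|^2_{s_t,q} \leq (\|\xi_g\|^2_{s_0,q})^{1-t}(\|\xi_g\|^2_{s_1,q})^{t}$ for $s_t = (1-t)s_0 + t s_1$, which is the conclusion of the lemma.
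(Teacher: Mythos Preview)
Your first approach---complexifying $s$ and invoking Berndtsson's direct-image positivity \cite{Ber}---is exactly the route taken in \cite{BL}, and it is correct in principle. However, the paper explicitly avoids it: since $G$ has logarithmic poles along $S$, the total weight $\phi + (p+k-2)G + q\psi$ is not locally bounded, and applying \cite{Ber} directly would require an approximation argument that you have not supplied. This is the technical gap the paper flags, and it is precisely why it switches to the Guan--Zhou method.

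Your description of that second route is not what the paper (or \cite[Corollary 3.7]{GZ}) actually does. The argument does \emph{not} use H\"ormander's estimate to interpolate between Riesz representers $v_{s_0}, v_{s_1}$ at two endpoints of a strip. Instead, it fixes a single $\sigma_0$, takes an arbitrary competitor $f \in A^2_{s_0,q}$, and applies the \emph{optimal} $L^2$ extension theorem (from \cite{Blo}, \cite{GZ}, \cite{BL}) to extend $f$ from $\{\sigma_0\}\times D$ to a small disc $\Delta_0 \times D$ with the sharp constant $\epsilon^2\pi$. Three ingredients are then combined: (i) subharmonicity of $\sigma \mapsto \log|\xi_g(F(\sigma,\cdot))|^2$ (since $\xi_g(F(\sigma,\cdot))$ is holomorphic in $\sigma$), (ii) Jensen's inequality applied to $\sigma \mapsto \log\|F(\sigma,\cdot)\|^2_{A^2_{s,q}}$, and (iii) the optimal estimate, to deduce the sub-mean-value inequality for $\log\|\xi_g\|^2_{s,q}$ over $\Delta_0$. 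Taking the supremum over $f$ gives subharmonicity, and translation invariance in $\mathrm{Im}\,\sigma$ then gives convexity in $s$. The optimality of the extension constant is essential: a mere H\"ormander estimate would introduce a constant strictly larger than $1$ that destroys step (iii). Your ``harmonicity of the mean of $\log\|V(\sigma,\cdot)\|^2$'' is not a step in this proof; the norm term is controlled via Jensen plus the optimal bound, not via any harmonicity.
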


In \cite{BL}, this convexity was proved using the positivity property of the bundle of Hilbert spaces of holomorphic functions proved in \cite{Ber}.
In our situation, the function $G$ has singularities along $S$, so some approximation arguments may be needed.
To avoid this kind of difficulties, we use the method of \cite[Corollary 3.7]{GZ}, in which Guan-Zhou proved that the optimal $L^2$ extension theorem implies the log-convexity of Bergman kernels. The same proof works in our situation and shows the convexity of $\log \|\xi_g\|^2_{s,q}$. For the convenience of readers, we will give the proof of Lemma \ref{lem:log-convexity}. Here is the point where we use the optimal result of $L^2$ extension of holomorphic functions.

\begin{proof}[Proof of Lemma \ref{lem:log-convexity}]
In this proof, we will write $\xi = \xi_g$ for simplicity.
Let $\sigma$ be a complex variable such that $s = {\rm Re}\, \sigma$. Then the weight function
$$\Psi(\sigma, z) := \phi(z)+(p+k-2)G(z) + q\psi({\rm Re}\, \sigma,z) $$
is plurisubharmonic in $(\sigma, z)$. Note that this function only depends on $s = {\rm Re}\, \sigma$ and $z$.
Fix $\sigma_0 \in \mathbb{C}$ and a disc $\Delta_0 =\{\sigma \in \mathbb{C}: |\sigma - \sigma_0| < \epsilon \}$ for $\epsilon >0$. We shall prove that the mean value inequality
$$\log \|\xi\|^2_{s_0, q} \leq \frac{1}{\epsilon^2 \pi}\int_{\sigma \in \Delta_0} \log \|\xi\|^2_{s, q} d\lambda(\sigma),$$
where $s = {\rm Re} \, \sigma$ and $s_0 = {\rm Re} \, \sigma_0$.

Fix $f \in A^2_{s_0, q}\setminus \{0\}$. By the definition of the norm  $\|\xi\|^2_{s_0, q}$, it holds that
$$\frac{|\xi(f)|^2}{\|f\|^2_{A^2_{s_0,q}}}\leq \|\xi\|^2_{s_0,q}.$$
By the optimal $L^2$ extension theorem, there exists a holomorphic function $F$ on $\Delta_0 \times \Omega$ such that
$$\int_{\Delta_0 \times \Omega} |F|^2 e^{-\Psi} \leq \epsilon^2 \pi \int_{\Omega} |f|^2 e^{-\Psi(s_0, \cdot)} = \epsilon^2 \pi \|f\|^2_{A^2_{s_0,q}}.$$
By Fubini's theorem, $F(\sigma, \cdot) \in A^2_{s,q}$ for a.e.\ $\sigma$. For such $\sigma$, we have
$$ \frac{|\xi(F(\sigma, \cdot))|^2}{\|F(\sigma, \cdot)\|^2_{A^2_{s,q}}} \leq \|\xi\|^2_{s,q} $$
Taking the logarithm and integrating over $\sigma \in \Delta_0$, we have that
$$ \frac{1}{\epsilon^2 \pi}\int_{\sigma \in \Delta_0}\log |\xi(F(\sigma, \cdot))|^2 - \frac{1}{\epsilon^2 \pi}\int_{\Delta_0} \log \|F(\sigma, \cdot)\|^2_{A^2_{s,q}} \leq \frac{1}{\epsilon^2 \pi}\int_{\Delta_0} \log \|\xi\|^2_{s,q}. $$
Since $\xi(F(\sigma, \cdot))$ is holomorphic in $\sigma$, $\log |\xi(F(\sigma, \cdot))|^2$ is subharmonic in $\sigma$. Thus we have
$$\log|\xi(f)|^2 = \log |\xi(F(\sigma_0, \cdot))|^2 \leq \frac{1}{\epsilon^2 \pi} \int_{\Delta_0}\log |\xi(F(\sigma, \cdot))|^2. $$
By Jensen's inequality and the optimal estimate, it holds that 
$$- \frac{1}{\epsilon^2 \pi}\int_{\Delta_0} \log \|F(\sigma, \cdot)\|^2_{A^2_{s,q}} \geq - \log \frac{1}{\epsilon^2 \pi}\int_{\Delta_0}  \|F(\sigma, \cdot)\|^2_{A^2_{s,q}} \geq -\log \|f\|^2_{A^2_{s_0,q}}.$$
Using these estimates, we have that
$$ \log \frac{|\xi(f)|^2}{\|f\|^2_{A^2_{s_0,q}}}\leq \frac{1}{\epsilon^2 \pi}\int_{\Delta_0} \log \|\xi\|^2_{s,q}.$$
Take the supremum for all $f \in A^2_{s_0,q} \setminus \{0\}$, we have that
$$\log \|\xi\|^2_{s_0,q} \leq \frac{1}{\epsilon^2 \pi} \int_{\Delta_0} \log \|\xi\|^2_{s,q},$$
which proves the subharmonicity of the function $\sigma \mapsto \log \|\xi\|^2_{s,q}.$ Since this is a subharmonic function which only depends on the real part of the variable, it is convex in $s$.
\end{proof}

In the following, we study the behavior of $\log \|\xi_g\|^2_{s,q}$.

\subsection{The boundedness of $\log \|\xi_g\|^2_{s,q}+ s$}
\begin{lemma}[{\cite[Lemma 3.2]{BL}}]
We have that
$$ \|\xi_g\|^2_{s,q} e^s$$
is bounded when $s \to -\infty$. In particular, by log-convexity, it is increasing in $s$ and it follows that
$$\|\xi_g\|^2_{0,q} \geq \lim_{s \to -\infty} \|\xi_g\|^2_{s,q} e^s. $$
\end{lemma}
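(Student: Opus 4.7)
The plan is to reduce the lemma to the uniform-in-$f$ estimate $\|\widehat{f}\|^2_{J^{(p-1)}} \leq C e^{-s}\|f\|^2_{s,q}$ for every $f \in A^2_{s,q}$ and every sufficiently negative $s$, where $\widehat{f}\in A^2(S,J^{(p-1)})$ denotes the image of $f$ in $\mathcal{I}_S^{p-1}/\mathcal{I}_S^p$. Granted this, Proposition~\ref{prop:Herm-met-on-jets}(4) (applied with smooth compactly supported extensions and a partition of unity on $S$) gives $\xi_g(f) = \int_S \langle \widehat{f}, g\rangle_{J^{(p-1)}}\, dV_S$, and fiberwise Cauchy--Schwarz yields $|\xi_g(f)|^2 \leq \|g\|^2_{J^{(p-1)}}\|\widehat{f}\|^2_{J^{(p-1)}} \leq C\|g\|^2_{J^{(p-1)}} e^{-s}\|f\|^2_{s,q}$, so that $\|\xi_g\|^2_{s,q} e^s \leq C\|g\|^2_{J^{(p-1)}}$.

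For the main estimate I would use the standard $L^2$-Cauchy inequality for Taylor coefficients. In a chart satisfying Condition~\ref{cond:good-coord}, expand $f(z',z'') = \sum_{|\alpha|\geq p-1} c_\alpha(z'')(z')^\alpha$ along the fibers $U_x$: orthogonality of monomials on the sphere, followed by integration in the radius, gives
\[
\sum_{|\alpha|=p-1}|c_\alpha(z''(x))|^2 \;\leq\; \frac{C_1}{R^{2(k+p-1)}}\int_{B_k(x,R)} |f|^2\, dV_{U_x}
\]
for every $R$ smaller than the chart radius. Combining with Claim~\ref{clm:homogen-limit} (which expresses $|\widehat{f}|^2_{J^{(p-1)}}(x)$ as a positive quadratic form in the $c_\alpha$ whose coefficients depend continuously on $x$) and using the continuity of $\phi, \gamma, A$, this upgrades to $|\widehat{f}|^2_{J^{(p-1)}}(x) \leq C_2 R^{-2(k+p-1)}\int_{B_k(x,R)} |f|^2 e^{-\phi} dV_{U_x}$. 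Integrating in $x$ over $S$ via a partition of unity and applying Fubini yields $\|\widehat{f}\|^2_{J^{(p-1)}} \leq C_3 R^{-2(k+p-1)}\int_{T_R} |f|^2 e^{-\phi}\, dV_{\mathbb{C}^n}$, where $T_R$ is the $z'$-tube of radius $R$ around $S$. Now choose $R := e^{(s-M)/2}$ with $M := \sup \gamma$ on a fixed neighborhood of $S$; the local form of $G$ forces $T_R \subset \{G<s\}$, on which $\psi(s,\cdot)\equiv 0$ so that $e^{-\Psi(s,\cdot)} = e^{-\phi-(p+k-2)G}$, and the pointwise bound $e^{(p+k-2)G}\leq e^{(p+k-2)s}$ gives $\int_{T_R}|f|^2 e^{-\phi}\, dV \leq e^{(p+k-2)s}\|f\|^2_{s,q}$. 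Since $R^{-2(k+p-1)} \leq C_4 e^{-(k+p-1)s}$ and the exponents telescope, $-(k+p-1)s + (p+k-2)s = -s$, the desired bound $\|\widehat{f}\|^2_{J^{(p-1)}} \leq C e^{-s}\|f\|^2_{s,q}$ follows.

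The monotonicity statement is then formal: Lemma~\ref{lem:log-convexity} makes $s\mapsto \log\|\xi_g\|^2_{s,q}+s$ convex on $(-\infty,0]$, and the first part shows it is bounded above as $s\to-\infty$. A convex function bounded above on a half-line is necessarily non-decreasing (otherwise a negative secant slope between some $s_1<s_2$ would, by monotonicity of slopes, force divergence as $s\to-\infty$), so $\|\xi_g\|^2_{s,q} e^s$ is non-decreasing in $s$ and $\|\xi_g\|^2_{0,q} \geq \lim_{s\to-\infty}\|\xi_g\|^2_{s,q} e^s$. The main technical obstacle I foresee is the careful bookkeeping of the $O(1)$ continuous errors from $\gamma, \phi, A$ and from the partition of unity, so that the final constant $C$ is genuinely uniform in $f$ and $s$ and the exponents match exactly to produce $e^{-s}$; conceptually the proof reduces to Cauchy's $L^2$-estimate on the fibers, which is automatic once the local form of $G$ is exploited.
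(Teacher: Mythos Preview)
Your proof is correct and shares the paper's overall skeleton: both reduce boundedness of $\|\xi_g\|^2_{s,q}e^s$ to an estimate of the form ``jet norm of $\widehat h$ on a compact piece of $S$ $\leq Ce^{-s}\int_{\{G<s\}}|h|^2 e^{-\phi-(p+k-2)G}$'', and then use that $\psi(s,\cdot)=0$ on $\{G<s\}$ to dominate the right side by $Ce^{-s}\|h\|^2_{s,q}$. The difference is only in how that core inequality is obtained. The paper applies Cauchy--Schwarz at the level of the annulus integral $\int_{\{t<G<t+1\}}h\,\overline{\widetilde g}\,e^{-\phi-(p+k-1)G}$ (for a smooth extension $\widetilde g$) and then invokes the separate comparison Lemma~\ref{lem:compare}, whose proof passes to the homogeneous $(p-1)$-part of $h$ and computes in polar coordinates on the fibers. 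You instead apply Cauchy--Schwarz directly in $L^2(S,J^{(p-1)})$ and establish the comparison via the classical $L^2$-Cauchy inequality for Taylor coefficients on a ball $B_k(x,R)$ with $R\sim e^{s/2}$; orthogonality of monomials here plays the role of the paper's polar computation, and your ball is the analogue of the paper's sublevel $U_x\cap\{G<s\}$. Your packaging is somewhat more self-contained and avoids the annulus formalism of Lemma~\ref{lem:compare}, at the cost of the partition-of-unity bookkeeping you flag. One small point to make explicit: since $S$ need not be compact, the uniform constants for $\gamma,\phi,A$ and the chart radii are only available on compacta, so the jet-norm estimate should be localized to a compact neighborhood of $\mathrm{supp}\,g$; this is harmless because Cauchy--Schwarz with the compactly supported $g$ already discards the rest of $S$.
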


\begin{proof}
We have that
$$
\|\xi_g\|^2_{s,q} = \sup_{h \in A^2_{s,q}} \frac{|\xi_g (h)|^2}{\|h\|^2_{A^2_{s,q}}} = \sup_h \frac{\left| \int_S \langle \widehat{h}, g \rangle_{J^{(p-1)}} dV_S \right|^2}{\|h\|^2_{A^2_{s,q}}},
$$
where $\widehat{h}$ denotes the image of $h$ in $J^{(p-1)} = \mathcal{I}^{p-1}_S/\mathcal{I}^p_S$.
Take a smooth extension $\widetilde{g}$ of $g$ on $D$.
By Proposition \ref{prop:Herm-met-on-jets} (4), it follows that
\begin{align*}
&\left| \int_S \langle \widehat{h}, g \rangle_{J^{(p-1)}} dV_S \right|^2 =  \lim_{t \to -\infty} \left|\int_{\{t < G<t+1\}} h \overline{\widetilde{g}}e^{-\phi-(p+k-1) G} \right|^2\\
& \leq \left[\lim_{t \to -\infty}  \int_{\{t<G<t+1\} \cap {\rm supp}\, \widetilde{g} } |h|^2 e^{-\phi - (p+k-1) G} \right]
\cdot \left[\lim_{t \to -\infty} \int_{\{t<G<t+1\}} |\widetilde{g}|^2 e^{-\phi-(p+k-1) G}\right].
\end{align*}
The second integral is independent of $h$ and $s$, thus can be bounded by a constant.
The first integral can be estimated by Lemma \ref{lem:compare} below as
\begin{align}
\lim_{t \to -\infty}  \int_{\{t<G<t+1\} \cap {\rm supp}\, \widetilde{g} } |h|^2 e^{-\phi - (p+k-1) G}\leq C e^{-s} \int_{L \cap \{G<s\}} |h|^2 e^{-\phi-(p+k-2)G} ,\label{eqn:limit}
\end{align}
where $L$ is a compact set with ${\rm supp}\, \widetilde{g} \subset L^\circ$.
Since $\psi(s,\cdot) = 0$ on $\{G<s\}$, we have that
$$ C e^{-s} \int_{L \cap \{G<s\}} |h|^2 e^{-\phi-(p+k-2)G}\leq C e^{-s} \int_{D} |h|^2 e^{-\phi-(p+k-2)G - q\psi(s,\cdot)} dV_{\mathbb{C}^n} = C e^{-s} \|h\|^2_{s,q}.$$
Thus we have that
$$\|\xi_g\|^2_{s,q} \leq C e^{-s}.$$
\end{proof}

\begin{lemma}\label{lem:compare}
Let $K$ and $L$ be compact subsets of $D$ such that $K \subset L^\circ$.
Then there are constants $C>0$ and $s_0<0$ such that, for every $h \in A^2(D,\phi+(p+k-2)G)$ and $s<s_0$,
$$\limsup_{t \to -\infty} \int_{K \cap \{t<G<t+1\}} |h|^2 e^{-\phi-(p+k-1) G} dV_{\mathbb{C}^n} \leq C e^{-s} \int_{L \cap \{G<s\}} |h|^2 e^{-\phi - (p+k-2) G} dV_{\mathbb{C}^n}.$$
\end{lemma}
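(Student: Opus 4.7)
The strategy is to reduce both sides of the desired inequality to shell integrals weighted by $e^{-\phi-(p+k-1)G}$, both of which are controlled by Proposition \ref{prop:Herm-met-on-jets}(4), and then to conclude using $K\subset L^\circ$. On the right-hand side, the weight swap $e^G\ge e^{s-1}$ on $\{s-1<G<s\}$ gives
\[
e^{-s}\int_{L\cap\{G<s\}}|h|^2 e^{-\phi-(p+k-2)G}\,dV \ge e^{-1}\int_{L\cap\{s-1<G<s\}}|h|^2 e^{-\phi-(p+k-1)G}\,dV,
\]
so it suffices to compare both shell integrals with the squared jet norm $\|\widehat h\|^2_{J^{(p-1)},\,K\cap S}$, where $\widehat h$ denotes the $(p-1)$-jet of $h$ along $S$.

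For the upper bound on the left-hand side, approximate $\mathbf{1}_K$ from above by $\psi^2$ with $\psi\ge 0$ smooth, $\psi\equiv 1$ on $K$, and $\psi$ compactly supported in $D$. Applying Proposition \ref{prop:Herm-met-on-jets}(4) to $\widetilde f=\psi h$, whose jet on $S$ is $\psi|_S\,\widehat h$, yields
\[
\limsup_{t\to-\infty}\int_{K\cap\{t<G<t+1\}}|h|^2 e^{-\phi-(p+k-1)G}\,dV \le \int_S \psi^2|_S\,\langle\widehat h,\widehat h\rangle_{J^{(p-1)}}\,dV_S,
\]
and letting $\psi\searrow \mathbf{1}_K$ monotonically gives $\limsup\le \|\widehat h\|^2_{J^{(p-1)},\,K\cap S}$. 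For the lower bound, I would pick $\chi$ smooth with $0\le\chi\le 1$, $\chi\equiv 1$ on $K$, and $\chi\le \mathbf{1}_L$ (so $\mathbf{1}_K\le\chi^2\le \mathbf{1}_L$), and then argue
\begin{align*}
\int_{L\cap\{s-1<G<s\}}|h|^2 e^{-\phi-(p+k-1)G}\,dV &\ge \int_D \chi^2\,\mathbf{1}_{\{s-1<G<s\}}|h|^2 e^{-\phi-(p+k-1)G}\,dV \\
&\ge (1-\epsilon)\|\widehat h\|^2_{J^{(p-1)},\,K\cap S}
\end{align*}
for $s$ sufficiently negative. Combining everything yields the lemma with $C=e/(1-\epsilon)$ for any $\epsilon\in(0,1)$.

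The main obstacle is making the last inequality uniform in $h$, since the convergence rate in Proposition \ref{prop:Herm-met-on-jets}(4) a priori depends on the higher-order Taylor jets of $h$ along $S$. The rescue is the orthogonality relation $\int_{S^{2k-1}}u^\alpha\overline{u^\beta}\,d\sigma=0$ for $\alpha\ne\beta$ employed in the proof of Claim \ref{clm:homogen-limit}: in polar coordinates $z'=ru$ on the chart, after angular integration the shell integral of $|h|^2$ decomposes as a \emph{sum of non-negative contributions} indexed by the Taylor degrees $|\alpha|\ge p-1$ of $h$, so the $|\alpha|=p-1$ term alone is already a lower bound. The remaining $(1-\epsilon)$-slack then comes only from the variation of the continuous factors $\phi$, $\gamma$, and the Jacobian $A$ across the shrinking shell, which is controlled uniformly in $h$ by the continuity moduli of these factors alone. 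Hence $s_0$ can be chosen depending only on $\epsilon$ and on $(K,L,\phi,\gamma,A)$, not on $h$.
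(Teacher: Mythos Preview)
Your argument is correct, and the key idea---the orthogonality $\int_{S^{2k-1}}u^\alpha\overline{u^\beta}\,d\sigma=0$ for $\alpha\ne\beta$, used to drop from $|h|^2$ to $|h_{p-1}|^2$ on a radially symmetric domain so that the threshold $s_0$ depends only on the continuity moduli of $\phi,\gamma,A$ and not on $h$---is exactly the one the paper uses. (A small quibble: this orthogonality is not actually invoked in Claim~\ref{clm:homogen-limit}; in the paper it appears only here, in the proof of Lemma~\ref{lem:compare}, phrased as ``since the weight $\log|z'|^2$ is a radial function, $\int|h_{p-1}|^2\le\int|h|^2$''.)

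The only genuine difference is in how the right-hand side is set up. You trade one factor of $e^{-G}$ for $e^{-(s-1)}$ on the shell $\{s-1<G<s\}$, reducing to a shell integral with the weight $e^{-\phi-(p+k-1)G}$; then you sandwich the $G$-shell between spherical shells and apply the angular orthogonality there. The paper keeps the full sublevel set $\{G<s\}$ and the original weight $e^{-\phi-(p+k-2)G}$: after bounding the continuous factors by constants on $K$, it works fiberwise on the ball $\{\log|z'|^2<s-C_2\}$ with the radial weight $|z'|^{-2(p+k-2)}$, and the same orthogonality gives $\int_{\mathrm{ball}}|h_{p-1}|^2\le\int_{\mathrm{ball}}|h|^2$. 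The paper's route is marginally more direct (no weight swap, no $\epsilon$-slack to manage), while yours makes the link to the limit formula of Proposition~\ref{prop:Herm-met-on-jets}(4) more symmetric between the two sides; both yield the lemma with an explicit constant.
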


\begin{proof}
We can assume that $K$ and $L$ are contained in a coordinate open set with Condition \ref{cond:good-coord} around $x \in S$.
Let $\chi$ be a cut-off function such that $\chi \equiv 1$ on $K$ and ${\rm supp}\, \chi \subset L^\circ$. Then we have
\begin{align*}
\limsup_{t \to -\infty} \int_{K \cap \{t<G<t+1\}} |h|^2 e^{-\phi-(p+k-1) G} dV_{\mathbb{C}^n} &\leq \limsup_{t \to -\infty} \int_{\{t<G<t+1\}} \chi^2|h|^2 e^{-\phi-(p+k-1)G}dV_{\mathbb{C}^n}\\
&= \int_S |\chi h|^2_{J^{(p-1)}} dV_{S}.
\end{align*}
By Fubini's theorem, the right-hand side can be written as
\begin{align*}
& Ce^{-s} \int_{G<s} |\chi h|^2 e^{-\phi-(p+k-2) G} dV_{\mathbb{C}^n}\\
&= Ce^{-s} \int_{x \in S} \left[\int_{\{G<s\} \cap U_x} |\chi h|^2 e^{-\phi -(p+k-2) G}dV_{U_x} \right] dV_S(x).
\end{align*}
Therefore it is enough to show that there exists a constant $C$ such that
$$ |\chi h(x)|^2_{J^{(p-1)}} \leq C e^{-s} \int_{\{G < s\}\cap U_x} |\chi h|^2 e^{-\phi - (p+k-2) G}dV_{U_x}$$
for each $x \in S \cap {\rm supp}\,\chi$.

Taking appropriate $\chi$, we can assume that $\chi$ is constant along each $U_x$ near $x \in S$.
Recall the assumption that $G=\log (|z_1|^2+\ldots+|z_k|^2) + \gamma$, where $\gamma$ is a continuous function.
We have that $\gamma$, $\phi$ and the coefficient of $dV_{U_x}$ (with respect to $d\lambda(z')$) are bounded by some constant, namely,
$$ -C_1 \leq \phi \leq C_1 $$
$$ -C_2 \leq \gamma \leq C_2 $$
$$ \frac{1}{C_3}dV_{U_x} \leq d\lambda(z')  \leq C_3 dV_{U_x}.$$

Using these bound, we only have to prove that there exists a constant $C$ satisfying
\begin{align*}
& \lim_{t \to -\infty} \int_{\{t-C_2<\log |z'|^2 < t+1+C_2 \}} |h(x)|^2 e^{-(p+k-1)\log|z'|^2} d\lambda(z') \\
& \leq C  e^{-s}\int_{\{\log|z'|^2 < s - C_2\}} |h|^2 e^{- (p+k-2)\log|z'|^2}d\lambda(z').
\end{align*}
First we treat the case where $h|_{U_x}$ is homogeneous of degree $p-1$.
By the formula (\ref{eqn:polar-integral-formula}), the integral in the left-hand side is $$\left(\frac{1}{2} + C_2\right) \int_{|u|=1}|h(u)|^2 d\lambda(u)$$ and one in the right-hand side is $$\frac{1}{2} e^{s-C_2} \int_{|u|=1}|h(u)|^2 d\lambda(u).$$ Therefore we can take $C$ satisfying the inequality.
In the general case, we write as $h = h_{p-1} + r_h$ with $h_{p-1}$ homogeneous and $r_h = o(|z|^{p-1})$.
We apply the above discussion to $h_{p-1}$, then we get a constant $C$ satisfying the above inequality.
By Proposition \ref{prop:Herm-met-on-jets} (2), the left-hand side is the same:
\begin{align*}
&\lim_{t \to -\infty} \int_{\{t-C_2<\log |z'|^2 < t+1+C_2 \}} |h(x)|^2 e^{-(p+k-1)p\log|z'|^2} d\lambda(z')\\
=&\lim_{t \to -\infty} \int_{\{t-C_2<\log |z'|^2 < t+1+C_2 \}} |h_{p-1}(x)|^2 e^{-(p+k-1)\log|z'|^2} d\lambda(z').
\end{align*}
On the other hand, since the weight $\log |z'|^2$ in the right-hand side is a radial function, we have that
\begin{align*}
&\int_{\{\log|z'|^2 < s - C_2\}} |h_{p-1}|^2 e^{- (p+k-2)\log|z'|^2}d\lambda(z')\\
\leq &\int_{\{\log|z'|^2 < s - C_2\}} |h|^2 e^{- (p+k-2)\log|z'|^2}d\lambda(z').
\end{align*}
This completes the proof.
\end{proof}

\subsection{Limit behavior}
We will prove the following
\begin{lemma}[{\cite[Lemma 3.5]{BL}}]\label{lem:limit-behavior}
For every $\delta>0$, there exists sufficiently large $q>0$ such that
$$\lim_{s \to -\infty} \|\xi_{g}\|^2_{s,q} e^{s} \geq \|\widetilde{g}\|^2_{J^{(p-1)}} - \delta.$$
\end{lemma}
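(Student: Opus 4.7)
The plan is to exhibit a specific holomorphic test function $H$ on $D$ whose $(p-1)$-jet along $S$ equals $\widetilde g$ and whose weighted norm satisfies $e^{-s}\|H\|^2_{s,q} \to \|\widetilde g\|^2_{J^{(p-1)}}$ up to an error controlled by $1/q$ as $s \to -\infty$. Since $\xi_g(H) = \|\widetilde g\|^2_{J^{(p-1)}}$ by the Riesz identity, the defining supremum $\|\xi_g\|^2_{s,q} \geq |\xi_g(H)|^2/\|H\|^2_{s,q}$ will then yield the required lower bound after choosing $q$ large in terms of $\delta$.

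I would construct $H$ as follows. Since $D$ is Stein, Cartan's Theorem~B applied to $0 \to \mathcal{I}_S^p \to \mathcal{I}_S^{p-1} \to J^{(p-1)} \to 0$ produces the surjection $H^0(D, \mathcal{I}_S^{p-1}) \twoheadrightarrow H^0(S, J^{(p-1)})$, giving an $H \in H^0(D, \mathcal{I}_S^{p-1})$ with $\widehat H = \widetilde g$. To secure boundedness of $H$ on $\overline D$ (needed for $H \in A^2_{s,q}$), one first approximates $\widetilde g$ in $A^2(S, J^{(p-1)})$ by a section extending holomorphically to $S'$ (a Runge-type density argument) and then applies Cartan~B on the enlarged Stein domain $D'$; the approximation error is absorbed into $\delta$. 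From $H \in \mathcal{I}_S^{p-1}$ one has $|H|^2 \lesssim |z'|^{2(p-1)}$ near $S$, so $|H|^2 e^{-(p+k-2)G}$ is locally integrable and $H \in A^2_{s,q}$ for every $s \leq 0$ and $q > 0$.

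The heart of the proof is the estimate on $\|H\|^2_{s,q}$. Splitting by whether $G < s$ or $G > s$ (so $\psi(s,\cdot) = 0$ on the former and $=G-s$ on the latter) gives
\[
\|H\|^2_{s,q} = \int_{\{G<s\}} |H|^2 e^{-\phi-(p+k-2)G}\, dV_{\mathbb C^n} + e^{qs}\int_{\{G>s\}} |H|^2 e^{-\phi-(p+k-2+q)G}\, dV_{\mathbb C^n}.
\]
A Fubini decomposition of the first integral into $S$ and the slices $U_x$, combined with the homogeneity calculation of Claim~\ref{clm:homogen-limit} applied to the leading $(p-1)$-jet of $H$, gives $e^{-s}\int_{\{G<s\}} |H|^2 e^{-\phi-(p+k-2)G} \to \|\widetilde g\|^2_{J^{(p-1)}}$, with the higher-order correction $H - \widetilde g = O(|z'|^p)$ producing only an error of order $e^{s/2}$. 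For the second integral, polar coordinates $z' = ru$ convert the integrand (on the leading jet) into $r^{1-2q}|\widetilde g(u)|^2 e^{-\phi-(p+k-1)\gamma}$, the crucial point being that the factor $e^{-(p+k-2+q)\gamma}$ combines with $r_s^{2-2q} = e^{(s-\gamma)(1-q)}$ coming from the radial integration to produce $e^{-(p+k-1)\gamma}$, now \emph{independent of $q$}. The radial integral contributes $\frac{1}{2q-2}e^{s(1-q)}$, so after multiplication by $e^{qs}$ the full second integral behaves like $\frac{\|\widetilde g\|^2_{J^{(p-1)}}}{q-1}\,e^s$; the away-from-$S$ piece contributes only $O(e^{qs})$ and is negligible.

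Combining, $\limsup_{s\to -\infty} e^{-s}\|H\|^2_{s,q} \leq \|\widetilde g\|^2_{J^{(p-1)}}\cdot \frac{q}{q-1}$, so $\liminf_{s\to-\infty} \|\xi_g\|^2_{s,q}\, e^s \geq \|\widetilde g\|^2_{J^{(p-1)}}\cdot \frac{q-1}{q}$, which exceeds $\|\widetilde g\|^2_{J^{(p-1)}} - \delta$ once $q > \|\widetilde g\|^2_{J^{(p-1)}}/\delta$. The main technical obstacle is the polar estimate of the $\{G>s\}$ contribution: a naive separation of the $e^{-(p+k-2+q)\gamma}$ weight from $|H|^2$ by a uniform bound on $\gamma$ would produce a factor $e^{Cq}$ blowing up with $q$, so one must keep the weights coupled to exploit the exact cancellation with the radial factor $r^{2-2q}$. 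A secondary subtlety is arranging an $H$ that is bounded on $\overline D$, handled by the shrinking argument at the start of \S 3 together with the density of holomorphically extendable sections in $A^2(S, J^{(p-1)})$.
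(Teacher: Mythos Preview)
Your proposal is correct and follows the same global strategy as the paper: choose a holomorphic test function $H$ (the paper calls it $g'$) whose $(p-1)$-jet approximates $\widetilde g$, bound $\|\xi_g\|^2_{s,q}$ from below by $|\xi_g(H)|^2/\|H\|^2_{s,q}$, and split $\|H\|^2_{s,q}$ according to $\{G<s\}$ versus $\{G>s\}$. The construction of $H$ via Runge-type approximation on $S\Subset S'$ followed by Cartan~B on $D'$ is exactly what the paper does.

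Where you diverge is in the estimation of the two pieces. You carry out both directly in polar coordinates on the slices $U_x$, obtaining genuine limits: $e^{-s}\int_{\{G<s\}}\to\|\widetilde g\|^2_{J^{(p-1)}}$ and $e^{-s}\cdot(\text{second piece})\to \frac{1}{q-1}\|\widetilde g\|^2_{J^{(p-1)}}$, the latter via the cancellation you flag between $e^{-(p+k-2+q)\gamma}$ and $r_s^{2-2q}$ (which, when made rigorous, is a dominated-convergence argument after the substitution $\tau=2\log r-s$, valid for each fixed $q$). The paper instead passes to the distribution function $F(t)$ defined by $\int_{\{G<s\}}|g'|^2e^{-\phi-(p+k-2)G}=\int_{-\infty}^s F(t)\,dt$ and proves two coordinate-free integral inequalities: Lemma~\ref{lem:first_term} bounds $\liminf_s e^{-s}\int_{-\infty}^s F$ by $\limsup_t\int_t^{t+1}F(t)e^{-t}$, and Lemma~\ref{lem:second_term} bounds $\liminf_s e^{-s}\int_s^0 F(t)e^{-q(t-s)}$ by $C/(q-1)$ using only the growth condition $\int_{-\infty}^s F\le Ce^s$. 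The paper therefore only obtains $\liminf$ control and concludes along a subsequence $s_\nu\to-\infty$, which suffices since $e^s\|\xi_g\|^2_{s,q}$ is monotone by log-convexity.

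Your route is more explicit and yields the sharper statement that the full limits exist, at the cost of repeating the local $\epsilon$-analysis of Claim~\ref{clm:homogen-limit} (and its analogue over $\{G>s\}$). The paper's route is coordinate-free and never unpacks the local form of $G$ again after \S2; in particular it sidesteps precisely the difficulty you identify, that a crude bound on $\gamma$ would introduce an $e^{Cq}$ factor, by working only with the layer-cake function $F$.
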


\begin{proof}
First we claim that there is a sequence in $A^2(D, \phi + (p+k-2)G) \cap \mathcal{O}(\overline{D})$ approximating $\widetilde{g}$ in $A^2(S,J^{(p-1)})$.
To verify the claim, we use the $L^2$ Runge-type approximation theorem for the vector bundle $J^{(p-1)}$ and Stein manifolds $S \Subset S'$:
\begin{theorem}[{\cite[Theorem 1.4.1]{Kerz}}]\label{thm:Runge-approximation}
Let $X$ be a Stein manifold and $\mu \in C^\infty(X)$ be a strongly plurisubharmonic exhaustion function. 
Define $\Omega:=\{\mu<0\}$.
Let $dV$ be a volume form on $X$.
Let $E$ be a Hermitian holomorphic vector bundle on $X$. 
Then there exists some $c>0$ such that for every $u \in \mathcal{O} \cap L^2(\Omega, E) $, there exists a sequence $\widehat{u}_n \in \mathcal{O}(\Omega_c, E)$ with $\Omega_c := \{\mu<c\}$ such that $\|\widehat{u}_n - u \|_{L^2(\Omega,E)} \to 0$.
\end{theorem}
The proof can be found in \cite[Section 2.5]{Kerz} in the case $E= \mathcal{O}$. The same proof works for vector bundle case.
By this theorem, we can take an $L^2$ approximation $\widetilde{g}_{S'} \in \mathcal{O}(S', J^{(p-1)})$ of $\widetilde{g}$.
Then we extend $\widetilde{g}_{S'}$ to a holomorphic function $g'_{D'}$ on $D'$ by Stein theory and by restriction we get $g' \in A^2(D, \phi + (p+k-2)G) \cap \mathcal{O}(\overline{D})$.

Fix $\epsilon>0$.
It follows that, for a sufficiently good approximation $g'$,
$$\|\xi_{g}\|^2_{s,q} = \sup_{h \in A^2_{s,q}} \frac{ |\xi_{g} (h)| ^2}{\|h\|^2_{A^2_{s,q}}} \geq \frac{|\langle \widetilde{g}, g' \rangle_{J^{(p-1)}}|^2}{\|g'\|^2_{A^2_{s,q}}} \geq \frac{(1-\epsilon)\|\widetilde{g}\|^4_{J^{(p-1)}}}{\|g'\|^2_{A^2_{s,q}}}.$$
We will prove the following claim:

\begin{claim}\label{clm:claim}
For every $\epsilon'>0$, choosing a sufficiently good approximation $g'$ and large $q>0$, we have:
$$\liminf_{s \to -\infty} e^{-s} \|g'\|^2_{s,q} \leq \|\widetilde{g}\|^2_{J^{(p-1)}} + \epsilon'.$$
\end{claim}
We decompose the left-hand side as follows:
\begin{align}
e^{-s}\|g'\|^2_{s,q} = e^{-s}\int_{D_s} |g'|^2 e^{-\phi-(p+k-2)G} + e^{-s}\int_{D \setminus D_s}|g'|^2 e^{-\phi-(p+k-2)G-q\psi(s,\cdot)}, \label{eqn:decomposition}
\end{align}
where $D_s$ denotes the set $\{G<s\} \subset D$.

We estimate the first term of (\ref{eqn:decomposition}) by the following
\begin{lemma}\label{lem:first_term}
$$\liminf_{s \to -\infty} e^{-s} \int_{D_s} |g'|^2 e^{-\phi-(p+k-2)G} dV_{\mathbb{C}^n} \leq  \limsup_{t \to -\infty} \int_{\{t<G<t+1\}}|g'|^2 e^{-\phi-(p+k-1)G} dV_{\mathbb{C}^n}.$$
\end{lemma}
We note that the right-hand side will be estimated by $\int_S |g'|^2_{J^{(p-1)}} dV_S$. Indeed, let $\chi$ be a cut-off function such that $\chi \equiv 1$ on $\overline{D}$ and ${\rm supp}\, \chi$ is contained in $D'$.
Then we have
\begin{align*}
\limsup_{t \to -\infty} \int_{\{t<G<t+1\}}|g'|^2 e^{-\phi-(p+k-1) G} dV_{\mathbb{C}^n} &\leq \limsup_{t \to -\infty} \int_{\{t<G<t+1\}}|\chi g'_{D'}|^2 e^{-\phi-(p+k-1) G} dV_{\mathbb{C}^n}\\
&= \int_{S'} \left|(\chi|_{S'}) \widetilde{g}_{S'}\right|^2_{J^{(p-1)}} dV_{S}.
\end{align*}
In the last line, we used Proposition \ref{prop:Herm-met-on-jets} (4).
Letting $\chi \downarrow 1_D$, we get
$$\liminf_{s \to -\infty} e^{-s} \int_{D_s} |g'|^2 e^{-\phi-(p+k-2)G} dV_{\mathbb{C}^n} \leq  \int_{S} |\widetilde{g}_{S'}|^2_{J^{(p-1)}} dV_{S}.$$
\begin{proof}[ Proof of Lemma \ref{lem:first_term}]
Let $F(t)$ be a function satisfying $$ \int_{D_s} |g'|^2 e^{-\phi - (p+k-2)G} dV_{\mathbb{C}^n} = \int_{-\infty}^s F(t)dt.$$
Then the desired inequality can be rewritten as
$$\liminf_{s \to -\infty} e^{-s} \int_{-\infty}^{s} F(t) dt \leq \limsup_{u \to -\infty} \int_{u}^{u+1} F(t) e^{-t} dt.$$
By substitution $s=\log S$, $t=\log T$, $u = \log U$, we have that
$$ \liminf_{S \to 0} \int_0^S \frac{F(\log T)}{T} dT \leq \limsup_{U \to 0 } \int_U^{eU} \frac{F(\log T)}{T^2} dT.$$
Letting $P(T):=\frac{F(\log T)}{T}$, we can rewrite the inequality we want to prove as
\begin{align}
\liminf_{s \to 0} \frac{1}{s} \int_0^s P(t) dt \leq \limsup_{u \to 0} \int_u^{eu} \frac{P(t)}{t} dt.\label{eqn:want-to-show-Lem3.6}
\end{align} 
Now we consider the mean value of the right-hand side
$$\frac{1}{r} \int_{u=0}^r \left[\int_{t=u}^{eu} \frac{P(t)}{t} dt\right] du.$$
By a change of variables $(u,t) \to (a,t)$ where $a = t/u$, we have 
\begin{align*}
\frac{1}{r} \int_{u=0}^r \left[\int_{t=u}^{eu} \frac{P(t)}{t} dt\right] du =\,& \frac{1}{r} \int_{a=1}^e \int_{t=0}^{ar} \frac{P(t)}{a^2} dt da\\
=\, &\int_{a=1}^e \frac{1}{a} \cdot \left[\frac{1}{ar} \int_{t=0}^{ar} P(t)dt\right] da\\
\geq\, &\inf_{a \in [1,e]} \frac{1}{ar} \int_{t=0}^{ar} P(t)dt\\
\geq\, &\inf_{s \in [0,er]} \frac{1}{s} \int_{t=0}^{s} P(t)dt.
\end{align*}
Thus we have that
$$\inf_{s \in [0,er]} \frac{1}{s} \int_{t=0}^{s} P(t)dt \leq \sup_{u \in [0,r]} \int_{t=u}^{eu} \frac{P(t)}{t} dt$$
and letting $r \to 0$ we proved the lemma.
\end{proof}

The second term of (\ref{eqn:decomposition}) will be estimated by the following
\begin{lemma}[{\cite[Lemma 3.4]{BL}}]\label{lem:second_term}
Let $F(t)$ be a function satisfying $\int_{-\infty}^s F(t)dt \leq Ce^s$ for some $C>0$ when $s \to -\infty$.
Then, for $q>1$, we have 
$$\liminf_{s \to -\infty} e^{-s} \int_s^0 F(t) e^{-q(t-s)}dt \leq \frac{C}{q-1}.$$
\end{lemma}

Note that the function defined by $$ \int_{D_s} |g'|^2 e^{-\phi - (p+k-2)G}d\lambda(z) = \int_{-\infty}^s F(t)dt$$
satisfies the assumption of the lemma.

\begin{proof}
Define $H(s) := e^{-s} \int_s^0 F(t) e^{-q(t-s)}dt$.
To estimate $\liminf H(s)$, we consider the mean value
$$\frac{1}{T} \int_{-T}^{0} H(s)ds.$$
Then we have
\begin{align*}
\frac{1}{T}\int_{-T}^{0} H(s) ds =& \frac{1}{T} \int_{s= -T}^0 e^{-s} \int_{t=s}^0 F(t) e^{-q(t-s)} dt\, ds\\
=&\frac{1}{T} \int_{t = -T}^0 \int_{s = -T} ^t F(t) e^{-qt+ (q-1) s} ds\, dt\\
\leq& \frac{1}{(q-1)T} \int_{t=-T}^0 F(t) e^{-t} dt.
\end{align*}
By integral by parts, it is equal to
\begin{align*}
& \frac{1}{(q-1)T} \left( \left[\int_{s = -\infty}^{t} F(s) ds \cdot e^{-t} \right]_{t=-T}^0 + \int_{t=-T}^0 \int_{s=-\infty}^t F(s) ds \cdot e^{-t} dt  \right)\\
\leq& \frac{1}{(q-1)T} \left( \int_{-\infty}^{0} F(s)ds- e^T \int_{-\infty}^T F(s) ds + \int_{t=-T}^0 C e^t \cdot e^{-t} dt  \right)\\
\leq& \frac{1}{(q-1) T}\left( \int_{-\infty}^{0} F(s)ds + CT \right).
\end{align*}
Letting $T \to +\infty$, we have that $\liminf H(s) \leq \frac{C}{q-1}$.
\end{proof}

By these estimates, we have that for sufficiently large $q$
$$\liminf_{s \to -\infty} e^{-s} \|g'\|^2_{s,q} \leq \|\widetilde{g}\|^2_{J^{(p-1)}}  + \frac{C}{q-1},$$
where $C$ is independent of $q$. Thus we proved Claim \ref{clm:claim}.

From this claim, we can deduce that for every $\epsilon>0$ there exists $q\gg 0$ such that $e^{-s_\nu}\|g'\|^2_{s_\nu,q} \leq \|\widetilde{g}\|^2_{J^{(p-1)}} + \epsilon$ for some $s_\nu \to -\infty$. Thus,
\begin{align*}
e^{s_\nu} \|\xi_g\|^2_{s_\nu,q} &\geq \frac{(1 -\epsilon) \|\widetilde{g}\|^4_{J^{(p-1)}}}{\|g'\|^2_{s_\nu,q}} e^{s_\nu}\\
&\geq \frac{(1-\epsilon) \|\widetilde{g}\|^4_{J^{(p-1)}}}{\|\widetilde{g}\|^2_{J^{(p-1)}} +\epsilon}\\
&\geq \|\widetilde{g}\|^2_{J^{(p-1)}} -\delta
\end{align*}
and Lemma \ref{lem:limit-behavior} is proved.
\end{proof}

It follows that, for every $\delta>0$,
$$\|\xi_g\|^2_{A^2(D,\phi+(p+k-2)G)^*} = \|\xi_g\|^2_{0,q} \geq \lim_{s \to -\infty} e^s \|\xi_g\|^2_{s,q} \geq \|\widetilde{g}\|^2_{J^{(p-1)}} -\delta.$$
Letting $\delta \to 0$, we have $\|\xi_g\|^2 \geq  \|\widetilde{g}\|^2_{J^{(p-1)}} $ and thus
$$\|F\|^2_{A^2/\mathcal{I}} \leq \sup_g \frac{\|f\|^2\|\widetilde{g}\|^2}{\|\xi_{g}\|^2_{A^2(D,\phi + (p+k-2)G)^*}}\leq \|f\|^2.  $$
Theorem \ref{thm:main} is therefore proved.

\subsection{More general cases}
As in Theorem 3.8 in \cite{BL}, we can consider the case where $G$ is not bounded from above. The statement in this case is as follows:
\begin{theorem}
	Let $D$ be a pseudoconvex domain in $\mathbb{C}^n$ and $S\subset D$ be a submanifold of $D$. Let $\psi$ be a plurisubharmonic function on $D$.
	Assume that there exists a plurisubharmonic function $G$ with $G \leq \psi$ and Condition \ref{cond:good-coord}. Note that here $G$ is not assumed to be negative.
	Let $\phi$ be a plurisubharmonic function on $D$ such that $\phi - \delta \psi$ is also plurisubharmonic.
	Then, for every $f \in A^2(S, J^{(p-1)})$, there exists an extension $F \in H^0(D, \mathcal{I}^)$ of $f$ such that
	$$\int_D |F|^2e^{-\phi-(k+p-2)G - \psi} \leq \left(\frac{1}{\delta}+1\right)\|f\|^2_{J^{(p-1)}}. $$
	Here, $\|\cdot\|_{J^{(p-1)}}$ is the $L^2$-norm on $J^{(p-1)}$ defined as in \S 2 with a weight function $\phi$.
\end{theorem}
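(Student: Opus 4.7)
The plan is to follow the proof of \cite[Theorem 3.8]{BL}, using Theorem~\ref{thm:main} and the machinery of \S 3 as a black box, with three adjustments: reducing to the bounded, negative-$G$ case; replacing the auxiliary function $\psi(s,z)=\max(G(z)-s,0)$ of \S 3.1 with one built from $\psi$; and extracting the constant $(1/\delta+1)$ from the hypothesis that $\phi-\delta\psi$ is plurisubharmonic.

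First, I would reduce to the bounded case via an exhaustion. Since $G$ is plurisubharmonic on the pseudoconvex domain $D$, each sublevel set $D_c:=D\cap\{G<c\}$ is pseudoconvex, and after intersecting with a large ball we may assume it is bounded. On $D_c$ the shifted function $G_c:=G-c$ is negative plurisubharmonic, still satisfies Condition~\ref{cond:good-coord}, and $S\cap D_c$ is a closed submanifold. The goal becomes proving the estimate on $D_c$ with a constant independent of $c$, then extracting the desired extension on $D$ by a Montel-type compactness argument (the family of extensions is uniformly bounded in the weighted $L^2$ norm, hence normal).

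Next, mirroring \S 3.1 I would introduce $\Psi(s,z):=\max(\psi(z)-s,0)$, plurisubharmonic in $(\sigma,z)$ with $\sigma=s+\sqrt{-1}s'$, and study the family of $L^2$ spaces
\[
A^2_{s,q}:=A^2\bigl(D_c,(\phi-\delta\psi)+(k+p-2)G_c+q\Psi(s,\cdot)\bigr).
\]
The total weight is plurisubharmonic because $\phi-\delta\psi\in\mathrm{PSH}(D)$, $G_c\in\mathrm{PSH}(D_c)$, and $\Psi$ is plurisubharmonic in $(\sigma,z)$; so Lemma~\ref{lem:log-convexity} yields the log-convexity of $s\mapsto\log\|\xi_g\|^2_{s,q}$ with no change in proof. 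The inequality $G\le\psi$ guarantees that $\Psi(s,\cdot)\equiv 0$ in a neighborhood of $S$ once $s$ is sufficiently negative, so the local analysis near $S$ underlying Lemmas~\ref{lem:compare}, \ref{lem:first_term} and \ref{lem:second_term} transplants verbatim, with $\psi(s,\cdot)$ replaced by $\Psi(s,\cdot)$.

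Finally, the constant $(1/\delta+1)$ is produced by choosing $q=1+\delta$: for $s\le 0$ one has the pointwise inequality $(1+\delta)\psi\le q\,\Psi(s,\cdot)$ on all of $D_c$, which gives
\[
e^{-\phi-\psi-(k+p-2)G_c}\le e^{-(\phi-\delta\psi)-(k+p-2)G_c-q\Psi(s,\cdot)},
\]
so the $e^{-\psi}$ factor on the left-hand side of the claim appears for free. Running the argument of \S 3.3 with Lemma~\ref{lem:second_term} applied for $q=1+\delta$ yields the limiting constant $\tfrac{1}{q-1}+1=\tfrac{1+\delta}{\delta}=\tfrac{1}{\delta}+1$. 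I expect the main obstacle to be this last step: verifying that the liminf/Jensen arguments of Lemmas~\ref{lem:first_term}--\ref{lem:second_term} still give the optimal constant when the auxiliary function is built from $\psi$ rather than $G$, and that the jet norm on the right-hand side (defined in \S 2 with weight $\phi$, not $\phi-\delta\psi$) converges correctly in the limit $s\to-\infty$; here the continuity-type tail estimates near $S$ must be re-examined, using $G\le\psi$ and Condition~\ref{cond:good-coord} to bound $\Psi(s,\cdot)$ by a function depending only on $G$ in a neighborhood of $S$.
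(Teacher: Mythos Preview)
Your proposal takes a different route from the paper. The paper's proof is a one-line reduction to \cite[Theorem~3.8]{BL}: one introduces an extra complex variable $z_0$ and runs the Berndtsson--Lempert argument there with the auxiliary datum $\widetilde f(z_0,z):=z_0^{\,p+k-1}f(z)$ in place of the $z_0^k f(z)$ used in \cite{BL}. The constant $(1/\delta+1)$ then emerges exactly as in \cite{BL}; no modification of the machinery of \S 3 is required.

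Your direct approach contains a genuine gap. You claim that ``the inequality $G\le\psi$ guarantees that $\Psi(s,\cdot)\equiv 0$ in a neighborhood of $S$ once $s$ is sufficiently negative,'' but this is false: $\Psi(s,z)=\max(\psi(z)-s,0)$ vanishes only on $\{\psi\le s\}$, and the hypothesis $G\le\psi$ gives no upper bound on $\psi$ near $S$. For instance $\psi\equiv 0$ is allowed, and then $\Psi(s,\cdot)=-s$ everywhere for $s<0$. In that case the weight in your $A^2_{s,q}$ equals $(\phi-\delta\psi)+(k+p-2)G_c-qs$, so $e^{s}\|\xi_g\|^2_{s,q}=e^{(1-q)s}\|\xi_g\|^2_{\ast}$ for a fixed dual norm $\|\cdot\|_\ast$, which tends to $+\infty$ as $s\to-\infty$ when $q=1+\delta>1$. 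Thus the boundedness step of \S 3.2 (the analogue of \cite[Lemma~3.2]{BL}) fails outright, and with it the monotonicity on which the whole comparison rests. The transplantation of Lemmas~\ref{lem:compare}, \ref{lem:first_term}, \ref{lem:second_term} likewise breaks down, since on the sets $\{t<G<t+1\}$ and $\{G<s\}$ the term $q\Psi(s,\cdot)$ does not vanish and the weight $(\phi-\delta\psi)+q\Psi(s,\cdot)$ has no reason to collapse to $\phi$ in the limit. The extra-variable device in \cite{BL} is designed precisely to sidestep this obstruction: it manufactures, in one higher dimension, a genuinely negative Green-type function with a logarithmic pole along the new hypersurface $\{z_0=0\}$, so that Theorem~\ref{thm:main} applies directly.
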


The proof is the same to the proof of Theorem 3.8 in \cite{BL}, using $\widetilde{f}(z_0, z) :=z_0^{p+k-1} f(z) $ instead of $\widetilde{f}(z_0, z) := z_0^k f(z)$ therein.

\vskip3mm
{\bf Acknowledgment. }
The author would like to thank his supervisor Prof.\ Shigeharu Takayama for valuable comments.
The author is supported by Program for Leading Graduate Schools, MEXT, Japan.
He is also supported by the Grant-in-Aid for Scientific Research (KAKENHI No.15J08115).

\bibliographystyle{plain}

\end{document}